\documentclass[11pt]{article}
\usepackage[margin=1in]{geometry}
\usepackage{amsmath,amssymb,amsthm}
\usepackage[hidelinks]{hyperref}
\newtheorem{theorem}{Theorem}
\newtheorem{lemma}[theorem]{Lemma}
\newcommand{\NN}{\mathbb Z_{\geq0}}
\title{Rationality and Quasipolynomiality of Restricted Rectangle Partitions}
\author{Mubin Shaikh\\
\small Independent Researcher\\
\small\texttt{shaikhmubin572@gmail.com}\\
\small ORCID: \href{https://orcid.org/0009-0005-1290-7114}{https://orcid.org/0009-0005-1290-7114}}
\hypersetup{pdfauthor={Mubin Shaikh},
pdftitle={Rationality and Quasipolynomiality of Restricted Rectangle Partitions}}
\date{Preprint --- Version 1.0\\September 5, 2026}
\begin{document}
\maketitle
\begin{abstract}
This paper proves Conjecture 5.10 of Gajdzica, Visser, and Zakarczemny on
restricted partitions of a rectangle. For every fixed positive integer
$k$, the generating function for feasible multisets of bars of lengths
at most $k$ tiling a $2\times n$ rectangle has denominator dividing
$\prod_{j=1}^k(1-x^j)$. Its coefficients are eventually quasipolynomial
of degree $k-1$ and period dividing $\operatorname{lcm}(1,\ldots,k)$.
The key observation is that appending a two-bar slab makes feasibility
upward closed within each parity class of multiplicities. Dickson's
lemma and finite inclusion--exclusion then give the precise denominator.
\end{abstract}

\section{Statement and conventions}
Fix an integer $k\geq1$. A rectangle partition here is a \emph{multiset}
of bars of sizes $1\times j$, $1\leq j\leq k$, which can tile a
$2\times n$ rectangle. Rotations are allowed, and rotated copies are
the same block type; geometric arrangements of a given multiset are
not distinguished. In particular a $1\times2$ bar may be vertical.
Let $f_k(n)$ count these multisets, with $f_k(0)=1$.

In the journal version of Gajdzica--Visser--Zakarczemny
\cite[Section 5.1, Conjecture 5.10]{GVZ}, this function is denoted
$p_{k,1}(2,n)$. The same conjecture appears in arXiv v2 as
Conjecture 5.10, p.~17. In v1 it is Conjecture 3.10 with notation
$p_{k,0}(2,n)$ \cite{pre}. The following theorem establishes the full assertion:
\begin{theorem}\label{thm:main}
For every integer $k\geq1$, there is a polynomial $P_k(x)\in\mathbb Z[x]$
such that, as formal power series,
\begin{equation}\label{eq:gf}
 \sum_{n\geq0} f_k(n)x^n
 =\frac{P_k(x)}{\prod_{j=1}^k(1-x^j)}.
\end{equation}
Consequently the reduced denominator divides the displayed product.
The function $f_k(n)$ is eventually quasipolynomial of degree $k-1$,
with quasiperiod dividing $\operatorname{lcm}(1,\ldots,k)$.
\end{theorem}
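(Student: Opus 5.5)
The plan is to encode a multiset by its multiplicity vector $a=(a_1,\dots,a_k)\in\NN^k$, where $a_j$ is the number of $1\times j$ bars. Let $F\subseteq\NN^k$ be the set of feasible vectors, meaning those whose bars tile some $2\times n$ rectangle. For such $a$ the area gives $\sum_j j a_j=2n$, so $n=w(a):=\tfrac12\sum_j ja_j$ and $\sum_{n}f_k(n)x^n=\sum_{a\in F}x^{w(a)}$.

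\textbf{Step 1 (the slab observation).} If $a\in F$, then $a+2e_j\in F$ for every $j\le k$. Take a tiling of the $2\times n$ rectangle and append on the right a $2\times j$ block made of two horizontal $1\times j$ bars; this tiles the $2\times(n+j)$ rectangle. Note that a bar may have length at most $k$ but need not fit vertically. The appended slab uses only horizontal bars, so no issue arises.

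\textbf{Step 2 (parity classes and Dickson).} For $r\in\{0,1\}^k$, put $F_r=\{b\in\NN^k: r+2b\in F\}$. By Step 1 each $F_r$ is upward closed in the componentwise order. By Dickson's lemma, $F_r$ is the up-set of a finite antichain $m^{(1)},\dots,m^{(s)}$. By inclusion--exclusion,
\[
\sum_{b\in F_r} y^b=\frac{\sum_{\emptyset\ne S\subseteq[s]}(-1)^{|S|+1}y^{\max_{i\in S}m^{(i)}}}{\prod_{j=1}^k(1-y_j)},
\]
where the maximum is taken componentwise; this uses that the up-set of a join is the intersection of the up-sets. Now substitute $y_j=x^j$. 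Since $w(r+2b)=w(r)+\sum_j jb_j$, and $w(r)$ is an integer whenever $F_r\neq\emptyset$, each class contributes $x^{w(r)}\cdot(\text{polynomial})/\prod_j(1-x^j)$. Summing over the $2^k$ classes yields \eqref{eq:gf} with $P_k\in\mathbb Z[x]$, and hence the claim about the reduced denominator.

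\textbf{Step 3 (quasipolynomiality, period, degree).} Expand $P_k/\prod_j(1-x^j)$ by partial fractions. Every pole is a root of unity of order dividing some $j\le k$, so $f_k(n)$ is eventually a quasipolynomial with period dividing $\operatorname{lcm}(1,\dots,k)$. A pole of order $d$ at $\zeta$ contributes terms of degree at most $d-1$, and the pole order is at most $k$ (attained only at $x=1$), so the degree is at most $k-1$. For the matching lower bound, note that every multiset of slabs (two horizontal $1\times j$ bars with total length $n$) is feasible and distinct. This gives $f_k(n)\ge p_{\le k}(n)\sim n^{k-1}/(k!(k-1)!)$, so the degree is exactly $k-1$. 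For a clean upper bound, $f_k(n)\le p_{\le k}(2n)=O(n^{k-1})$.

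I expect no step to be seriously hard; the crux is simply noticing Step 1 and respecting parity. The main care point is Step 3. First, the statement ``degree $k-1$'' should be read as degree exactly $k-1$ in its leading (constant-coefficient) term. Second, one must argue this using the slab lower bound rather than only the pole order, because cancellation with $P_k$ could a priori lower the pole order at $1$.
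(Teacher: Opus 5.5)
Your proposal is correct and follows the paper's proof essentially step for step: the slab observation $a+2e_j\in F$, upward closure within each parity class, and Dickson's lemma with inclusion--exclusion for the denominator; for the exact degree, a lower bound from multisets built entirely out of slabs. The differences are cosmetic. You use partial fractions where the paper rewrites over $(1-x^L)^k$, and your lower bound is $p_{\le k}(n)$ with its classical asymptotic, where the paper uses the elementary count $(\lfloor n/W\rfloor+1)^{k-1}$.
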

A quasipolynomial of period $L$ means a function given by one polynomial
in $n$ on each residue class modulo $L$; its degree is the maximum of
their degrees. ``Eventually'' permits finitely many exceptional initial
values. Changing only the convention at $n=0$ would change the generating
function by a polynomial and would not affect the theorem.

\section{Finite generation of upper sets}
Equip $\NN^d$ with the coordinatewise order. The next elementary form
of Dickson's lemma is included to make the argument self-contained.
\begin{lemma}\label{lem:dickson}
Every subset of $\NN^d$ has finitely many coordinatewise minimal
elements. If $U\subseteq\NN^d$ is upward closed, then
\[
 U=\bigcup_{a\in M}(a+\NN^d),
\]
where $M$ is its finite set of minimal elements.
\end{lemma}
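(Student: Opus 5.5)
We prove Lemma~\ref{lem:dickson} in two steps: first that a subset of $\NN^d$ has no infinite antichain and no infinite strictly descending chain, so its minimal elements are finitely many; then the decomposition of an upper set. Everything rests on one sequence fact: every infinite sequence $v^{(1)},v^{(2)},\ldots$ in $\NN^d$ has an infinite subsequence that is coordinatewise nondecreasing. We prove this by induction on $d$. For $d=1$, a sequence of nonnegative integers either takes some value infinitely often, giving a constant subsequence, or is unbounded, giving a strictly increasing one. For the inductive step, first pass to a subsequence that is nondecreasing in the first $d-1$ coordinates. Then apply the case $d=1$ to the last coordinate of that subsequence. This induction is routine.

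Now let $S\subseteq\NN^d$ and let $M$ be its set of minimal elements. Any two distinct minimal elements are incomparable, since if one were $\le$ the other and they were different, the larger would not be minimal. Suppose $M$ were infinite. Enumerate infinitely many distinct elements of $M$ as a sequence and extract a nondecreasing subsequence. Its first two terms satisfy $v^{(i)}\le v^{(j)}$ with $v^{(i)}\neq v^{(j)}$, which contradicts incomparability. Hence $M$ is finite.

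For the second assertion, let $U$ be upward closed with minimal set $M$. The inclusion $\bigcup_{a\in M}(a+\NN^d)\subseteq U$ follows directly from upward closure, because each $a\in M$ lies in $U$. For the reverse inclusion, take $u\in U$. The set $\{v\in U: v\le u\}$ is finite and nonempty, since it contains $u$ and is contained in the box $\prod_i[0,u_i]$. So it has an element $a$ minimizing $\sum_i a_i$. This $a$ is minimal in $U$: any $b\in U$ with $b\le a$ also satisfies $b\le u$, so $b$ lies in the same finite set. If $b\neq a$, then $\sum_i b_i<\sum_i a_i$, which contradicts the choice of $a$. Therefore $a\in M$, and $u\in a+\NN^d$.

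The only step needing care is the claim that the minimal elements are finite in number. This comes down entirely to the nondecreasing-subsequence fact, and in particular to combining the one-dimensional dichotomy with the inductive hypothesis. The existence of a minimal element below each $u$ is elementary, because the order ideal below $u$ is finite. If one prefers to avoid sequences, an alternative is induction on $d$ via Hilbert's basis theorem applied to monomial ideals, but the direct argument above is self-contained and is the one I would write.
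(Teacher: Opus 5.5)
Your proof is correct and follows essentially the same route as the paper's. In both, the one-dimensional dichotomy is applied coordinate by coordinate (your induction on $d$) to rule out infinitely many pairwise incomparable minimal elements, and the decomposition comes from a minimal element of the finite set $\{v\in U: v\le u\}$; choosing one that minimizes $\sum_i a_i$ is just a concrete way of producing the minimal element the paper invokes directly.
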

\begin{proof}
Every infinite sequence of nonnegative integers has an infinite
nondecreasing subsequence. Indeed, either one value occurs infinitely
often, or every bounded set contains only finitely many terms and a
strictly increasing subsequence can be selected successively.
Apply this observation successively to each coordinate of an infinite
sequence in $\NN^d$. The resulting infinite subsequence is nondecreasing
in every coordinate. Thus an infinite set of distinct mutually
incomparable elements is impossible. Distinct minimal elements are
incomparable, proving finiteness.

For $u\in U$, the nonempty finite set
$\{v\in U:v\leq u\}$ contains a minimal element $a$. Any element of
$U$ strictly below $a$ would also be below $u$, so $a$ is minimal in
$U$ itself. Hence $u\in a+\NN^d$. The reverse inclusion follows
from upward closure. The empty set is covered by taking $M=\varnothing$.
\end{proof}

For positive integer weights $w=(w_1,\ldots,w_d)$, put
$w\cdot a=\sum_jw_ja_j$.
\begin{lemma}\label{lem:series}
If $U\subseteq\NN^d$ is upward closed, then
\[
 \sum_{u\in U}x^{w\cdot u}
 =\frac{A_U(x)}{\prod_{j=1}^d(1-x^{w_j})}
 \quad\text{for some }A_U(x)\in\mathbb Z[x].
\]
More explicitly, for the finite minimal set $M$,
\begin{equation}\label{eq:ie}
 A_U(x)=\sum_{\varnothing\ne S\subseteq M}
 (-1)^{|S|+1}x^{w\cdot\bigvee S},
\end{equation}
where $\bigvee S$ is the coordinatewise maximum of the vectors in $S$.
\end{lemma}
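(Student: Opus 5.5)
By Lemma~\ref{lem:dickson}, $U=\bigcup_{a\in M}(a+\NN^d)$ with $M$ finite. The plan is to compute the weighted series of this finite union by inclusion--exclusion over the cones, and to evaluate each intersection of cones as a single translated orthant. If $U$ is empty, then $M=\varnothing$, the sum in \eqref{eq:ie} is empty, and $A_U=0$, which matches $\sum_{u\in U}x^{w\cdot u}=0$. So assume $M\neq\varnothing$.

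First I treat a single translated orthant. Because $w\cdot(a+v)=w\cdot a+w\cdot v$ and the coordinates of $v\in\NN^d$ are independent,
\[
 \sum_{u\in a+\NN^d}x^{w\cdot u}=x^{w\cdot a}\prod_{j=1}^d\sum_{v_j\geq0}x^{w_jv_j}
 =\frac{x^{w\cdot a}}{\prod_{j=1}^d(1-x^{w_j})}.
\]
This is an identity of formal power series. Each coefficient is finite because every $w_j\geq1$, so only finitely many $u$ have a given value of $w\cdot u$.

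Next I identify the intersections. For a nonempty $S\subseteq M$,
\[
 \bigcap_{a\in S}(a+\NN^d)=\bigvee S+\NN^d,
\]
since $u\geq a$ for all $a\in S$ holds exactly when $u\geq\bigvee S$ coordinatewise.

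Now apply inclusion--exclusion to indicator functions. For every $u$,
\[
 \mathbf 1_U(u)=\sum_{\varnothing\neq S\subseteq M}(-1)^{|S|+1}\mathbf 1_{\bigvee S+\NN^d}(u).
\]
This holds because $1-\prod_{a\in M}\bigl(1-\mathbf 1_{a+\NN^d}(u)\bigr)$ is the indicator of the union, and expanding the product gives the right-hand side.

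Multiply this identity by $x^{w\cdot u}$ and sum over $u\in\NN^d$. The outer sum over $S$ is finite, so it can be interchanged with the sum over $u$ coefficientwise. Applying the single-orthant formula to each term $\bigvee S+\NN^d$ then yields
\[
 \sum_{u\in U}x^{w\cdot u}=\frac{A_U(x)}{\prod_{j=1}^d(1-x^{w_j})}
\]
with $A_U$ given exactly by \eqref{eq:ie}. Since $M$ is finite and each term is $\pm$ a monomial, $A_U\in\mathbb Z[x]$.

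No step is a genuine obstacle. The only points needing care are the finiteness of $M$, which is supplied by Lemma~\ref{lem:dickson}, and the justification of formal power series manipulations, which the positivity of the weights provides.
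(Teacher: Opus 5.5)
Your proposal is correct and follows the paper's proof: both identify each intersection of cones as the orthant at $\bigvee S$, evaluate the orthant series, apply finite inclusion--exclusion to the union from Lemma~\ref{lem:dickson}, and use positive weights to justify the formal power series identities. Your version only spells out the indicator-function form of inclusion--exclusion and the empty case in more detail.
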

\begin{proof}
For each nonempty $S\subseteq M$,
\[
 \bigcap_{a\in S}(a+\NN^d)=(\bigvee S)+\NN^d,
 \qquad
 \sum_{u\in(\bigvee S)+\NN^d}x^{w\cdot u}
 =\frac{x^{w\cdot\bigvee S}}{\prod_j(1-x^{w_j})}.
\]
Apply finite inclusion--exclusion to Lemma~\ref{lem:dickson}.
Positive weights ensure that every coefficient counts only finitely
many vectors. Thus these are valid formal-power-series identities,
with no convergence issue. For $U=\varnothing$ the numerator is zero.
\end{proof}

\section{The slab construction and the denominator}
Represent a multiset by its multiplicity vector
$m=(m_1,\ldots,m_k)\in\NN^k$. Its area is
$\sum_{j=1}^k jm_j$. Call $m$ feasible if this multiset tiles the
$2\times n$ rectangle where
$n=\tfrac12\sum_{j=1}^k jm_j$ is a nonnegative integer.
Let $\mathcal F_k$ be the set of feasible vectors.

If $m\in\mathcal F_k$, append a $2\times j$ slab tiled by two
horizontal $1\times j$ bars to the right of any witnessing tiling.
Writing $e_j$ for the $j$th coordinate vector gives
\begin{equation}\label{eq:slab}
 m\in\mathcal F_k\quad\Longrightarrow\quad
 m+2e_j\in\mathcal F_k\qquad(1\leq j\leq k).
\end{equation}
This is only an addition statement. Removing two bars from an arbitrary
feasible multiset is not assumed to preserve feasibility.

Every vector has a unique expression $m=\varepsilon+2u$, with
$\varepsilon\in\{0,1\}^k$ and $u\in\NN^k$. Only parity vectors in
\[
 E_k=\left\{\varepsilon\in\{0,1\}^k:
                  \sum_{j=1}^k j\varepsilon_j\text{ is even}\right\}
\]
can be feasible. For $\varepsilon\in E_k$, define
\[
 b_\varepsilon=\frac12\sum_{j=1}^k j\varepsilon_j,
 \qquad
 U_\varepsilon=\{u\in\NN^k:\varepsilon+2u\in\mathcal F_k\}.
\]
By \eqref{eq:slab}, every $U_\varepsilon$ is upward closed.
Applying Lemma~\ref{lem:series} with weights $w_j=j$ now gives
\begin{align*}
 \sum_{n\geq0}f_k(n)x^n
 &=\sum_{\varepsilon\in E_k}x^{b_\varepsilon}
                 \sum_{u\in U_\varepsilon}x^{\sum_jju_j}\\
 &=\frac{\displaystyle\sum_{\varepsilon\in E_k}
                 x^{b_\varepsilon}A_{U_\varepsilon}(x)}
              {\prod_{j=1}^k(1-x^j)}.
\end{align*}
There are finitely many parity vectors, and each numerator is a
polynomial with integer coefficients. This proves \eqref{eq:gf}.
Each multiset is counted once by its unique multiplicity vector;
neither distinct tilings nor different possible slab decompositions
introduce multiplicities.

\section{Period and exact degree}
Put $L=\operatorname{lcm}(1,\ldots,k)$. Since every $j$ divides $L$,
\eqref{eq:gf} can be rewritten as
\[
 \sum_{n\geq0} f_k(n)x^n
 =\frac{B_k(x)}{(1-x^L)^k},\qquad
 B_k(x)=P_k(x)\prod_{j=1}^k
                (1+x^j+\cdots+x^{L-j})\in\mathbb Z[x].
\]
Write $B_k(x)=\sum_{a=0}^{D}\beta_ax^a$. For $n\geq D$, coefficient
extraction using $(1-z)^{-k}=\sum_{h\geq0}\binom{h+k-1}{k-1}z^h$
gives
\begin{equation}\label{eq:quasi}
 f_k(n)=\sum_{\substack{0\leq a\leq D\\a\equiv n\pmod L}}
       \beta_a\binom{(n-a)/L+k-1}{k-1}.
\end{equation}
On each residue class this is a polynomial in $n$ of degree at most
$k-1$, with rational coefficients.

For completeness, elementary counting proves that the degree is
exactly $k-1$, without appealing to the source's asymptotics.
For $k=1$, only $2n$ unit squares are possible, so $f_1(n)=1$.
For $k\geq2$, set $W=\sum_{j=2}^k j$. Choose independently
\[
 0\leq a_j\leq\lfloor n/W\rfloor\quad(2\leq j\leq k),
 \qquad a_1=n-\sum_{j=2}^k ja_j\geq0.
\]
The multiset having $2a_j$ bars of length $j$ is feasible by giving
each row $a_j$ such bars. Distinct choices give distinct multisets.
Conversely, a vector of area $2n$ is determined by $m_2,\ldots,m_k$,
since $m_1=2n-\sum_{j=2}^k jm_j$, and each $m_j\leq2n$.
Therefore
\[
 (\lfloor n/W\rfloor+1)^{k-1}
 \leq f_k(n)\leq(2n+1)^{k-1}.
\]
The lower bound along every residue class in \eqref{eq:quasi} rules
out degree less than $k-1$. This proves the exact degree, and finishes
Theorem~\ref{thm:main}.

\section{Scope and verification}
The proof covers every fixed $k\geq1$ and establishes the entire
conjecture, including its precise denominator restriction. It does not
give optimal onset indices, minimal quasiperiods, or the explicit
numerators for arbitrary $k$. Dickson's lemma and upper-set generating
functions are classical; the application is the slab construction on
parity classes of feasible multisets. No claim of a new general
finite-generation theorem is made.

The companion \texttt{verify\_rectangle\_partitions.py} compares
literal cell tilings, deduplicated by their multiplicity vectors,
with an independent row-allocation test allowing vertical dominoes.
It also checks slab extensions and finite inclusion--exclusion
identities exactly. Its finite minimal-vector searches are labeled
bounded diagnostics, not certificates of all minimal vectors.
No finite computation is needed for the universal proof above.

As of September 5, 2026, the checked journal and preprint sources
state this conjecture. Searches by authors, title, conjecture number,
restricted tilings, quasipolynomials, and finite-generation mechanisms
located no prior resolution. The authors' August 2026 follow-ups
\cite{note,asymptotic} concern unrestricted rectangle asymptotics,
not this fixed-bar-length statement. This is a bounded literature
audit, not a guarantee against unindexed or unpublished work.

\end{document}